\newcommand{\myfootnote}[2]{\begingroup
  \def\@makefnmark{}%
  \addtocounter{footnote}{-1}%
  \footnote{{#1} #2}%
  \endgroup}
\title{On five dimensional Sasakian Lie algebras with trivial center}
\author{E. Loiudice and A. Lotta}
\date{}
\newtheorem{theorem}{Theorem}
\newtheorem{remark}{Remark}
\newtheorem{lemma}{Lemma}
\newtheorem{corollary}{Corollary}
\DeclareMathOperator{\spn}{span}
\DeclareMathOperator{\End}{End}
\newcommand{\di}{\mbox{d}}
\begin{document}

\maketitle
\myfootnote{{\em Mathematics Subject Classification (2000)}: Primary 53C25, 22E60; Secondary 53C30, 53C35.}{}
\myfootnote{{\em Keywords and Phrases}: Sasakian Lie algebra, Sasakian $\varphi$-symmetric space, Contact metric $(k,\mu)$--space.}{}

\begin{abstract}
We show that every five-dimensional Sasakian Lie algebra with trivial center is $\varphi$-symmetric. 
Moreover starting from a particular Sasakian structure on the Lie group $SL(2,\mathbb{R})\times\text{Aff}(\mathbb{R})$ we obtain a family of 
contact metric $(k,\mu)$ structures whose Boeckx invariants assume all values less than $-1$.
\end{abstract}

\section{Introduction}
In this paper we study some geometric features of Lie groups endowed with a left--invariant Sasakian structure. We focus on the five dimensional case,
our basic tool being the complete classification of five dimensional Sasakian Lie algebras carried out by Andrada, Fino and Vezzoni in \cite{AndradaFinoVezz}.
These Lie algebras fall into two classes: those whose center is one dimensional and generated by the Reeb vector field $\xi$,  and those
having trivial center. 

Our results concern the centerless case, and our purpose is twofold. 
First of all we prove that, in this case, the simply connected Lie group $G$ corresponding to a Sasakian Lie algebra 
 is {\em always} a Sasakian $\varphi$-symmetric space. We remark that, in their classification of five dimensional Sasakian $\varphi$-symmetric Lie algebras carried out
in \cite{FinoCalv}, Calvaruso and Fino excluded the centerless Lie algebras, so our result fills this gap. Indeed, we study the 
canonical fibration $G\to G/H$ of the Sasakian space $G$ over a homogeneous K\"ahler manifold, showing that $G/H$ is always either a product of two one dimensional complex space forms, namely $\mathbb{CH}^1(\lambda)\times\mathbb{CH}^1(\mu)$,  or $\mathbb{CP}^1(\lambda)\times\mathbb{CH}^1(\mu)$, or it is a two
dimensional complex space form $\mathbb{CH}^2(\alpha)$.

Our second objective is to show that, starting from a particular Sasakian metric $g$ on the Lie group
$SL(2,\mathbb{R})\times\text{Aff}(\mathbb{R})$, with contact form $\eta$, by a suitable deformation of  $g$,
one obtains a one parameter family of left--invariant metrics $g_a$, $a>1$, associated to the same 
$\eta$, for which the Reeb vector field $\xi$ 
satisfies the $(k,\mu)$ curvature condition
\begin{equation}
\label{kmu}
R(X,Y)\xi=k(\eta(Y)X-\eta(X)Y)+\mu(\eta(Y)hX-\eta(X)hY),\qquad k,\mu\in\mathbb{R},
\end{equation}
where $2h$ denotes the Lie derivative of the structural tensor $\varphi$ along $\xi$. 
For more information about this widely studied class of associated metrics,
we refer the reader to Blair's book \cite[Chapter~7]{Blair2010Book}. 
The Boeckx invariants $I_a$ of these metrics exhaust all values in $(-\infty,-1)$. 
We observe that Boeckx himself in \cite{Boeckx} provided Lie group models for contact metric $(k,\mu)$ spaces with $I<-1$; 
we feel that our construction provides an alternative way to obtain these models (at least in dimension five),  which seems to be more geometrically transparent.
Indeed, the definition of the metrics $g_a$ relies on a general principle explored in \cite{Carriazo}, which provides $(k,\mu)$ structures starting from Sasakian ones endowed with additional geometric objects, namely a pair of conjugate, integrable Legendre distributions, whose Pang invariants 
are suitable multiples of the original Sasakian metric.
However, no specific examples of application of these principle were given in \cite{Carriazo}, so we found interesting to carry out explicitly this construction on our
Sasakian Lie groups.

\section{Preliminaries}
For general notions concerning contact metric geometry and for the notation we refer the reader to Blair's book \cite{Blair2010Book}.
It is well known that a locally simmetric Sasakian manifold must be a space of constant curvature \cite{okumura}. This fact motivated Takahashi to introduce the notion of Sasakian $\varphi$-symmetric space, which is an analogous notion of Hermitian symmetric space, see \cite{takahashi}. We recall some basic
facts concerning this concept.
Let $(M,\varphi,\xi,\eta,g)$ be a Sasakian manifold. Consider an open neighborhood $U$ of $x\in M$ such that the induced Sasakian structure on $U$, denoted with the same symbols, is regular. Let $\pi:U \rightarrow U/\xi$ be the corresponding fibration and $(J,\bar{g})$ the induced K\"ahlerian structure on $U/\xi$ \cite{ogiue}. 
Then $M$ is called a \emph{Sasakian locally $\varphi$-symmetric space} if $U/ \xi$ is always a Hermitian locally symmetric space. 

This notion can be characterized also using the concept of $\varphi$-geodesic symmetry. A \emph{$\varphi$-geodesic} of a Sasakian manifold $(M, \varphi,\xi,\eta,g)$ is a geodesic $\gamma$ of $M$ such that $\eta(\gamma ')=0$; while a \emph{$\varphi$-geodesic symmetry} of $M$ at $m \in M$ is a local diffeomorphism $\sigma_m$ at $m$ such that for each $\varphi$-geodesic $\gamma=\gamma(s)$, with $\gamma(0)$ in the trajectory of the integral curve of $\xi$ passing through $m$, we have that 
$$
\sigma_m(\gamma(s))=\gamma(-s)
$$
for each $s$.

\begin{theorem}
 A Sasakian manifold $M$ is a Sasakian locally $\varphi$-symmetric space if it admits at every point a $\varphi$-geodesic symmetry which is also a local automorphism of the structure.
\end{theorem}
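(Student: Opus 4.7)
The plan is to use the canonical local fibration $\pi : U \to U/\xi$ and reduce the statement to the fact that a Hermitian manifold admitting a holomorphic isometric geodesic symmetry at every point is Hermitian locally symmetric.

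First, I would observe that along any geodesic $\gamma$ of a Sasakian manifold the function $\eta(\gamma')$ is constant: using $\nabla_X\xi=-\varphi X$ and the skew-symmetry of $\varphi$ with respect to $g$, one has $\tfrac{d}{ds}\eta(\gamma')=g(\nabla_{\gamma'}\xi,\gamma')=-g(\varphi\gamma',\gamma')=0$. Hence the $\varphi$-geodesics are exactly the horizontal geodesics of $\pi$, and since $\pi$ is a Riemannian submersion, they project to geodesics of $(U/\xi,\bar g)$; conversely, every geodesic of the base lifts horizontally to a $\varphi$-geodesic.

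Second, given a $\varphi$-geodesic symmetry $\sigma_m$ at $m\in U$ which is a local automorphism of the structure, I would descend it. Being a local Sasakian automorphism, $\sigma_m$ is an isometry preserving $\xi,\eta,\varphi$; in particular it preserves the integral curves of $\xi$ and so sends fibers of $\pi$ to fibers. It therefore induces a local diffeomorphism $\bar\sigma_{\bar m}$ of $U/\xi$ at $\bar m=\pi(m)$ with $\pi\circ\sigma_m=\bar\sigma_{\bar m}\circ\pi$. Preservation of the horizontal distribution and of $g$ makes $\bar\sigma_{\bar m}$ an isometry of $\bar g$, while preservation of $\varphi$ translates, under the identification of horizontal vectors with tangent vectors to $U/\xi$, into $\bar\sigma_{\bar m\,*}\circ J=J\circ \bar\sigma_{\bar m\,*}$, so $\bar\sigma_{\bar m}$ is also holomorphic.

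Third, I would identify $\bar\sigma_{\bar m}$ with the geodesic symmetry of $U/\xi$ at $\bar m$. Given a geodesic $\bar\gamma$ of $U/\xi$ with $\bar\gamma(0)=\bar m$, choose its horizontal lift $\gamma$ starting at a point of the fiber through $m$; this $\gamma$ is a $\varphi$-geodesic satisfying the hypothesis of the definition, hence $\sigma_m(\gamma(s))=\gamma(-s)$. Projecting via $\pi$ yields $\bar\sigma_{\bar m}(\bar\gamma(s))=\bar\gamma(-s)$, which characterizes $\bar\sigma_{\bar m}$ as the geodesic symmetry at $\bar m$. Since $\bar m$ is arbitrary in $U/\xi$ and $x$ was arbitrary in $M$, every local K\"ahler quotient $U/\xi$ is Hermitian locally symmetric, i.e.\ $M$ is Sasakian locally $\varphi$-symmetric.

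The main technical point is the descent step: one must check that $\sigma_m$, which is only defined on a neighborhood of $m$, descends to a well-defined diffeomorphism of a full neighborhood of $\bar m$ in $U/\xi$, and that this descent is controlled simultaneously by all $\varphi$-geodesics starting on the fiber through $m$ rather than by a single one. Both issues are handled by the local existence of horizontal lifts and by the fact that $\sigma_m$ preserves the entire $\xi$-orbit through $m$, which is precisely why the definition of $\varphi$-geodesic symmetry allows $\gamma(0)$ to range over this whole orbit.
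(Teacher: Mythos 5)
The paper does not prove this statement: it is recalled from Takahashi's paper \cite{takahashi}, so there is no internal proof to compare against. Your argument is correct and is the standard one: the computation $\tfrac{d}{ds}\eta(\gamma')=g(\nabla_{\gamma'}\xi,\gamma')=-g(\varphi\gamma',\gamma')=0$ (valid since $\nabla_{\gamma'}\gamma'=0$ and $\varphi$ is skew-symmetric) shows $\varphi$-geodesics are exactly the horizontal geodesics of the local Riemannian submersion $\pi:U\to U/\xi$; a local automorphism preserves $\xi$, hence the fibers, hence descends to a holomorphic isometry of $(U/\xi,J,\bar g)$; and reversing all horizontal lifts of radial geodesics forces the descended map to be the geodesic symmetry at $\bar m$, so $U/\xi$ is Hermitian locally symmetric by Cartan's criterion. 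You also correctly identify and dispose of the only delicate point, namely that the definition allows $\gamma(0)$ to range over the whole $\xi$-orbit through $m$, which is what makes the descent well-defined on a full neighborhood of $\bar m$.
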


Let $M$ be a Sasakian locally  $\varphi$-symmetric space. If any $\varphi$-geodesic symmetry of $M$ is extendable to a global automorphism of $M$, and if the Killing vector field $\xi$ generates a one parameter group of global transformations, then $M$ is called a \emph{Sasakian $\varphi$-symmetric space}.
The following theorems are proved in \cite{takahashi}.
\begin{theorem}
 A Sasakian globally $\varphi$-symmetric space is a principal $G^1$-bundle over a Hermitian globally symmetric space.
\end{theorem}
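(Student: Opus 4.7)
The plan is to construct the fibration and check each piece of structure in turn: first the principal $G^1$-bundle, then the Kähler structure on the base, and finally the Hermitian globally symmetric structure. Throughout, $G^1$ will denote $\mathbb{R}$ or $S^1$ according as the orbits of $\xi$ are lines or circles.

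First I would use the hypothesis that $\xi$ generates a one-parameter group $\{\phi_t\}$ of global transformations of $M$, giving a smooth action of $G^1$ on $M$; since $\xi$ is nowhere zero this action is locally free. Properness of the action---the step I expect to require the most care---can be established by observing that the group generated by all global $\varphi$-geodesic symmetries acts transitively on $M$ (a standard consequence of their existence at every point, combined with the flow of $\xi$), so $M$ is homogeneous; compactness of the isotropy, together with the fact that the $G^1$-action commutes with $\{\phi_t\}$, then forces the quotient to be Hausdorff. Hence $\bar{M} := M/G^1$ is a smooth manifold and $\pi : M \to \bar{M}$ is a principal $G^1$-bundle.

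Next I would push down the Sasakian tensors. Because $\xi$ is Killing and $\mathcal{L}_\xi \varphi = 0$, the restriction of $g$ to the horizontal distribution $\mathcal{H} = \ker \eta$ is $G^1$-invariant and descends to a Riemannian metric $\bar{g}$ on $\bar{M}$; similarly $\varphi|_\mathcal{H}$ descends to an almost complex structure $J$. The standard fact about the fibration of a regular Sasakian manifold \cite{ogiue} then gives that $(\bar{M}, J, \bar{g})$ is Kähler.

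Third, to upgrade Kähler to Hermitian globally symmetric, I would descend the $\varphi$-geodesic symmetries. Fix $x \in M$ and let $\sigma_x$ be the global $\varphi$-geodesic symmetry at $x$, which by hypothesis is a global automorphism of the Sasakian structure. Since $\sigma_x$ preserves $\xi$, it permutes the fibers of $\pi$ and thus induces a well-defined diffeomorphism $\bar\sigma_{\pi(x)}$ of $\bar{M}$, which is a holomorphic isometry of $(\bar{M}, J, \bar{g})$ because $\sigma_x$ preserves $g$, $\varphi$ and $\eta$. To see that $\bar\sigma_{\pi(x)}$ is the geodesic symmetry of $\bar{M}$ at $\pi(x)$, I would exploit the Riemannian submersion property that horizontal lifts of geodesics of $\bar{M}$ are $\varphi$-geodesics of $M$: given a geodesic $\bar\gamma$ of $\bar{M}$ through $\pi(x)$, lift it to a $\varphi$-geodesic $\gamma$ whose initial point lies on the fiber through $x$, and compute
\[
\bar\sigma_{\pi(x)}(\bar\gamma(s)) = \pi(\sigma_x(\gamma(s))) = \pi(\gamma(-s)) = \bar\gamma(-s).
\]
Hence each point of $\bar{M}$ admits a global holomorphic isometric geodesic symmetry, so $(\bar{M}, J, \bar{g})$ is Hermitian globally symmetric, completing the proof. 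The genuinely nontrivial step is the properness of the $G^1$-action; once the smooth principal bundle is in hand, the descent of both the Sasakian tensors and the symmetries is essentially formal.
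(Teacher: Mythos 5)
The paper does not actually prove this statement; it is quoted from Takahashi \cite{takahashi}, and your overall architecture (homogeneity from the global symmetries, a fibration over the orbit space of $\xi$, descent of the K\"ahler structure via \cite{ogiue}, and descent of the $\varphi$-geodesic symmetries to geodesic symmetries of the base) is exactly the classical route. Your third step is correct and cleanly argued: since $\pi$ is a Riemannian submersion with fibers the orbits of $\xi$, horizontal lifts of geodesics of $\bar M$ are $\varphi$-geodesics starting on the fiber through $x$, so $\bar\sigma_{\pi(x)}$ reverses geodesics through $\pi(x)$ and is a holomorphic isometry, which is what is needed.

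The genuine gap is in your second step, the one you yourself flag as delicate. Hausdorffness of $M/G^1$ does not follow from homogeneity together with compact isotropy: a homogeneous one-parameter flow can perfectly well have non-closed orbits (the irrational linear flow on a torus is homogeneous), and even if the quotient were Hausdorff that alone would not give a smooth manifold structure or a principal bundle. What is actually needed is the \emph{regularity} of $\xi$ in the sense of Palais, i.e.\ that every point has a neighborhood met by each integral curve of $\xi$ in at most one connected piece; for a homogeneous contact manifold this is precisely Theorem~4 of Boothby--Wang \cite{BoothWang}, which is the citation the present paper itself uses in Section~2 for the same purpose. So the correct chain is: the global $\varphi$-geodesic symmetries and the flow of $\xi$ generate a transitive automorphism group, hence $(M,\eta)$ is a homogeneous contact manifold, hence $\xi$ is regular by Boothby--Wang, and then the (complete) regular flow yields the quotient manifold and the principal $G^1$-bundle structure. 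Your argument as written replaces this nontrivial theorem with an assertion about Hausdorff quotients that is false in general, so the principal bundle structure is not established. Once that citation is inserted, the rest of your proof goes through.
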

Here $G^1$ is a $1$-dimensional Lie group isomorphic to the one parameter group of transformations generated by $\xi$.
\begin{theorem}
 A complete and simply connected Sasakian locally $\varphi$-symmetric space is a globally $\varphi$-symmetric space.
\end{theorem}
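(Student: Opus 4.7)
The plan is to mimic the classical Cartan--Ambrose--Hicks extension strategy used to prove that a complete simply connected Riemannian locally symmetric space is globally symmetric, with the necessary adaptations to the $\varphi$-geodesic setting. The two ingredients one must produce are (i) a globally defined one-parameter group of transformations generated by $\xi$, and (ii) a global automorphism $\sigma_m$ of the Sasakian structure extending the local $\varphi$-geodesic symmetry at each $m\in M$.

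For (i), I would observe that $\xi$ is a Killing vector field on the complete Riemannian manifold $(M,g)$ whose integral curves coincide with the geodesics of $\xi$. Completeness of $g$ therefore guarantees that $\xi$ is a complete vector field, whence its flow is defined for all times and gives the required one-parameter group of global transformations preserving the whole Sasakian structure.

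For (ii), fix $m\in M$ and let $\sigma_m$ denote the local $\varphi$-geodesic symmetry at $m$, which by the local $\varphi$-symmetry hypothesis is a local automorphism of $(\varphi,\xi,\eta,g)$. The plan is to extend $\sigma_m$ along curves as follows. Given any point $q\in M$, I would join $m$ to $q$ by a piecewise smooth curve $c$ consisting of $\varphi$-geodesic segments and integral curves of $\xi$; along each segment one can analytically continue $\sigma_m$ by composing with the local $\varphi$-symmetries at finitely many intermediate points and with the flow of $\xi$, exploiting the fact that the Sasakian structure (and hence its local automorphisms) is real-analytic in suitable coordinates. Because $M$ is complete, these continuations exist on the whole segment; because they are local automorphisms, they preserve the structure tensors and agree on overlaps up to the covering transformations generated by the flow of $\xi$.

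The main obstacle is to show that the endpoint of this analytic continuation depends only on $q$ and not on the chosen path $c$. Here I would invoke the standard monodromy argument: two paths from $m$ to $q$ bound a homotopy by simple connectedness of $M$, and along any such homotopy the analytic continuation varies continuously; since the set of homotopies for which the two continuations agree at the endpoint is both open and closed, path-independence follows. This yields a globally defined map $\sigma_m:M\to M$ that is locally the $\varphi$-geodesic symmetry, hence a global automorphism of the Sasakian structure satisfying $\sigma_m(\gamma(s))=\gamma(-s)$ on every $\varphi$-geodesic through the orbit of $\xi$ at $m$. Together with (i), this shows that $M$ is globally $\varphi$-symmetric in the sense of the definition recalled above.
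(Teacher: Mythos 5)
This theorem is not proved in the paper at all: it is quoted verbatim from Takahashi, and the authors explicitly write that ``the following theorems are proved in \cite{takahashi}.'' So the comparison can only be with the argument in that reference, whose skeleton your proposal does reproduce: completeness of $\xi$ from completeness of $g$ (your point (i) is correct, since $\xi$ is a unit Killing field with geodesic integral curves), plus extension of the local $\varphi$-geodesic symmetries to global automorphisms via continuation along paths and a monodromy argument using simple connectedness.

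There is, however, a genuine gap at the linchpin of step (ii). You write that the continuation works by ``exploiting the fact that the Sasakian structure (and hence its local automorphisms) is real-analytic in suitable coordinates,'' but this is precisely the nontrivial point that must be established, not a fact one may assume: for a merely smooth Riemannian metric, local isometries do \emph{not} continue along paths, and the monodromy argument never gets off the ground. In Takahashi's treatment one first shows that the pseudogroup generated by the local $\varphi$-geodesic symmetries and the flow of $\xi$ acts locally transitively, so the manifold is locally homogeneous and hence carries a real-analytic structure in which $g$ (and the other tensors) are analytic; only then does the classical extension theorem for local isometries of complete, simply connected, analytic Riemannian manifolds (Kobayashi--Nomizu, Vol.~I, Ch.~VI) apply. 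Two further points in your sketch would fail as stated: the continuation is not obtained by ``composing with the local $\varphi$-symmetries at intermediate points'' (that produces a different local automorphism, not a continuation of $\sigma_m$; the correct mechanism is that an isometry of an analytic manifold is determined by its $1$-jet and continues uniquely along curves), and the claim that the continuations ``agree on overlaps up to the covering transformations generated by the flow of $\xi$'' would destroy well-definedness of the resulting global map --- in the correct argument they agree exactly. Finally, after extending $\sigma_m$ to a global isometry one must still verify that it preserves $\varphi$, $\xi$ and $\eta$ everywhere, not just on the initial neighborhood; this again uses analyticity (or a separate open-and-closed argument) and is missing from your proposal.
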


\bigskip
Now let $G$ be a simply connected Lie group endowed with a left-invariant Sasakian structure  $(\varphi,\xi,\eta,g)$. 
Let $\mathfrak{g}$ be the Lie algebra of $G$, so that $\xi\in\mathfrak{g}$.
Then, by homogeneity, $\xi$ is regular (cf. \cite[Theorem~4]{BoothWang}) and the fibration of $G$ is the canonical projection $p:G\to G/H$, 
where $H\subset G$ is the analytic subgroup with $Lie(H)=\mathbb{R} \xi$. In this case, the induced K\"ahler metric $\bar g$ on
$G/H$ is the unique one making $p$ a Riemannian submersion; clearly it is $G$-invariant. Hence $G$ is a Sasakian $\varphi$-symmetric
space if and only if $G/H$ is a Hermitian symmetric space.

\section{Five-dimensional Sasakian Lie algebras} \label{section on 5-dim lie algebras}

A \emph{Sasakian Lie algebra} is a Lie algebra $\mathfrak{g}$ endowed with a quadruple $(\varphi,\xi,\eta,g)$ where $\varphi \in \End (\mathfrak{g})$, $\xi \in \mathfrak{g}$, $\eta \in \mathfrak{g}^*$ and $g$ is an inner product such that
\begin{equation}
\begin{aligned}
 &\eta(\xi)=1, \quad \varphi^2 = -Id+\eta \otimes \xi, \quad g(\varphi X,\varphi Y)=g(X,Y)-\eta(X)\eta(Y),   \\
  &g(X,\varphi Y)=\di \eta(X,Y), \quad N_{\varphi}=-2 \di \eta \otimes \xi,   
\end{aligned}
\end{equation}
where 
$$N_{\varphi}(X,Y):=\varphi^2 [X,Y]+[\varphi X,\varphi Y]-\varphi [\varphi X,Y]- \varphi [X,\varphi Y].$$

Sasakian Lie algebras have been classified in \cite{AndradaFinoVezz} treating separately the two cases of Lie algebras with trivial or 
nontrivial center. We recall the classification result concerning Sasakian Lie algebras with trivial center \cite[Theorem~13]{AndradaFinoVezz}:
\begin{theorem}
 Let $(\mathfrak{g},\varphi, \xi,\eta,g)$ be a five dimensional Sasakian Lie algebra with trivial center. Then $\mathfrak{g}$ is isomorphic to one of the following Lie algebras: the direct products $\mathfrak{sl}(2,\mathbb{R})\times \mathfrak{aff(\mathbb{R})}$, $\mathfrak{su}(2)\times \mathfrak{aff(\mathbb{R})}$, or the non-unimodular solvable Lie algebra $\mathbb{R}^2 \ltimes \mathfrak{h}_3$.
\end{theorem}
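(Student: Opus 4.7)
The plan is to analyze $\mathfrak{g}$ through its Levi decomposition, using the algebraic constraints imposed by the Sasaki axioms together with the trivial-center hypothesis to cut the list of candidates down to the three claimed algebras.

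First I would set up the basic structure. Writing $\mathfrak{g} = \mathbb{R}\xi \oplus V$ with $V := \ker\eta$, the Sasakian axioms turn $V$ into a 4-dimensional Hermitian space with complex structure $\varphi|_V$ and inner product $g|_V$. Since $\xi$ is Killing and preserves $\varphi$, both being standard consequences of the Sasaki conditions, the operator $D := \ad(\xi)|_V$ lies in $\mathfrak{u}(2)$, and the triviality of $Z(\mathfrak{g})$ forces $D \neq 0$. In addition, the Nijenhuis identity $N_\varphi = -2\, d\eta \otimes \xi$ translates into explicit quadratic relations among the brackets of vectors in $V$, and the identity $d\eta(X,Y)=-\tfrac12\eta([X,Y])$ for left-invariant forms links the $\xi$-component of brackets to the Hermitian form on $V$.

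Next I would split according to the Levi decomposition. In dimension five the semisimple factor $\mathfrak{s}$ has dimension $0$ or $3$. In the non-solvable subcase $\mathfrak{s} \in \{\mathfrak{sl}(2,\mathbb{R}), \mathfrak{su}(2)\}$ and the radical $\mathfrak{r}$ is 2-dimensional, hence isomorphic to $\mathbb{R}^2$ or $\mathfrak{aff}(\mathbb{R})$. I would rule out the abelian radical as follows: for $\mathfrak{su}(2)$ there is no non-trivial real 2-dimensional representation, so an abelian $\mathfrak{r}$ would be central; for $\mathfrak{sl}(2,\mathbb{R})$ a direct computation shows that the standard representation on $\mathbb{R}^2$ cannot be extended to a Hermitian structure on $V$ for which $N_\varphi$ vanishes modulo $\xi$. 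With $\mathfrak{r} \cong \mathfrak{aff}(\mathbb{R})$, the same integrability conditions together with the trivial-center hypothesis force $[\mathfrak{s}, \mathfrak{r}] = 0$---any non-trivial action of $\mathfrak{s}$ on $\mathfrak{r}$ either produces a central element or breaks the Nijenhuis relation---yielding the direct products $\mathfrak{sl}(2,\mathbb{R}) \times \mathfrak{aff}(\mathbb{R})$ and $\mathfrak{su}(2) \times \mathfrak{aff}(\mathbb{R})$.

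For the solvable case I would invoke the classification of 5-dimensional solvable Lie algebras, e.g.\ Mubarakzyanov's tables. For each entry with trivial center I would test whether a compatible Sasakian structure exists. The non-degeneracy of $d\eta$ on $V$ (equivalent to $\eta \wedge (d\eta)^2 \neq 0$) requires that brackets of elements of $V$ have substantial components along $\xi$; this eliminates several nilradical shapes, including the abelian one. The remaining candidates are cut down by the skew-Hermiticity of $D$ and the vanishing of $N_\varphi$, leaving only $\mathbb{R}^2 \ltimes \mathfrak{h}_3$. The main obstacle is precisely this step, where the combinatorial enumeration is long and the Sasakian compatibility check is delicate. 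A cleaner tactical route, which I would adopt, is to first put $D \in \mathfrak{u}(2)$ in a complex normal form on $V$, parametrize the remaining brackets $V\times V\to\mathfrak{g}$ compatibly with the resulting eigenspace decomposition, and finally treat $N_\varphi = -2\, d\eta \otimes \xi$ as a polynomial system in the structure constants; the trivial-center assumption then isolates the unique solvable isomorphism class from the resulting finite list.
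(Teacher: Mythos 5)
First, a point of orientation: the paper does not prove this statement at all --- it is quoted verbatim from Andrada, Fino and Vezzoni \cite[Theorem~13]{AndradaFinoVezz}, and what the paper actually uses is the finer output of that classification, namely the explicit bracket families \eqref{A1}--\eqref{B4} together with the identification of the factors $\mathfrak{aff}(\mathbb{R})$, $\mathfrak{sl}(2,\mathbb{R})$, $\mathfrak{su}(2)$ inside them. So there is no in-paper argument to measure your proposal against; it can only be judged on its own terms.

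On its own terms, your outline is a sensible strategy but not yet a proof: the two steps that carry all the difficulty are asserted rather than carried out. (i) In the non-solvable case you must exclude $\mathfrak{sl}(2,\mathbb{R})\ltimes_{\rho}\mathbb{R}^2$ with $\rho$ the standard representation; you write that ``a direct computation shows'' the Nijenhuis condition fails, but this computation is exactly the substance of the case, and nothing in your setup makes it routine --- there is no a priori reason for $\xi$, $\ker\eta$ or $\varphi$ to be adapted to the Levi decomposition, so all relative positions must be considered. (By contrast, the subcase $\mathfrak{r}\cong\mathfrak{aff}(\mathbb{R})$ is easier than you make it: every derivation of $\mathfrak{aff}(\mathbb{R})$ is inner, so $\mathrm{Der}(\mathfrak{aff}(\mathbb{R}))$ is solvable and the action of the semisimple factor is automatically trivial, with no appeal to the Sasakian structure or to the center.) (ii) The entire solvable case is deferred to ``test each entry of Mubarakzyanov's tables'' or ``solve a polynomial system''; this is precisely where the cited classification spends its effort, producing the normal forms \eqref{A3}, \eqref{A4}, \eqref{B3}, \eqref{B4}, and without executing it you have not shown that $\mathbb{R}^2\ltimes\mathfrak{h}_3$ is the only solvable outcome (nor identified which actions of $\mathbb{R}^2$ on $\mathfrak{h}_3$ actually occur --- your claim of a \emph{unique} solvable isomorphism class is stronger than what the enumeration would deliver without further argument). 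The preliminary reductions you do carry out (that $D=\ad(\xi)|_{\ker\eta}$ lies in $\mathfrak{u}(2)$ and is nonzero by triviality of the center, that the Levi factor has dimension $0$ or $3$, that $\mathfrak{su}(2)$ admits no nontrivial real $2$-dimensional representation) are all correct, but they only frame the problem rather than solve it.
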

Here $\mathfrak{aff}(\mathbb{R})$ and $\mathfrak{h}_3$ denote respectively the Lie algebra of the Lie group of affine motions of $\mathbb{R}$ and the real three-dimensional Heisenberg Lie algebra.

\medskip

Let $(\mathfrak{g},\varphi, \xi,\eta,g)$ be a five dimensional Sasakian Lie algebra with trivial center. According to \cite[Section~ 3.2]{AndradaFinoVezz},
after performing a $\mathcal{D}$-homothetic deformation of the structure, there exists an orthonormal basis $\{ e_1, \dots ,e_5 \}$ of $\mathfrak{g}$ satisfying:
$$
\ker \eta =\spn \{e_1, \dots ,e_4 \}, \, e_5=\xi, \, \varphi (e_1)=-e_2, \, \varphi (e_3)=-e_4,
$$
and such that one of the following eight sets of bracket relations holds:
\begin{equation}\label{A1} \tag{A1}
\begin{aligned}
 &[e_1,e_2]=\frac{2}{c} e_1-2e_5, &[e_1,e_3]&=ce_4, &[e_1,e_4]&=-ce_3, &[e_1,e_5]&=0,\\
 &[e_2,e_3]=-fe_4, &[e_2,e_4]&=fe_3,&[e_2,e_5]&=0, &[e_3,e_4]&=-2e_5,\\
  &[e_3,e_5]=-e_4, &[e_4,e_5]&=e_3, 
\end{aligned} 
\end{equation}

\begin{equation}\label{A2} \tag{A2}
\begin{aligned}
 &[e_1,e_2]=-a e_1-b e_2 -2e_5, &[e_1,e_3]&=ce_4, &[e_1,e_4]&=-ce_3, \\
 &[e_1,e_5]=0, &[e_2\,e_3]&=-\frac{2+ac}{b}e_4, &[e_2,e_4]&=\frac{2+ac}{b}e_3, \\
 &[e_2,e_5]=0, &[e_3,e_4]&=-2e_5, &[e_3,e_5]&=-e_4, \\
 &[e_4,e_5]=e_3, 
\end{aligned} 
\end{equation}

\begin{equation}\label{B1} \tag{B1}
\begin{aligned}
 &[e_1,e_2]=-a e_1 -2e_5, &[e_1,e_3]&=\frac{2}{a}e_4, &[e_1,e_4]&=-\frac{2}{a}e_3,&[e_1,e_5]&=0,  \\
 &[e_2,e_3]=-fe_4, &[e_2,e_4]&=fe_3, &[e_3,e_4]&=-2e_5, &[e_2,e_5]&=0, \\  
 &[e_3,e_5]=e_4, &[e_4,e_5]&=-e_3,
\end{aligned} 
\end{equation}

\begin{equation}\label{B2} \tag{B2}
\begin{aligned}
 &[e_1,e_2]=-a e_1-b e_2 -2e_5, &[e_1,e_3]&=ce_4, &[e_1,e_4]&=-ce_3, \\
 &[e_1,e_5]=0, &[e_2,e_3]&=-\frac{ac-2}{b}e_4, &[e_2,e_4]&=\frac{ac-2}{b}e_3, \\ 
 &[e_2,e_5]=0, &[e_3,e_4]&=-2e_5, &[e_3,e_5]&=e_4,  \\
 &[e_4,e_5]=-e_3, 
\end{aligned} 
\end{equation}

\begin{equation}\label{A3} \tag{A3}
\begin{aligned}
 &[e_1,e_2]=-a e_1-2e_5, &[e_1,e_3]&=-\frac{2}{a}e_4, &[e_1,e_4]&=\frac{2}{a}e_3, \\
 &[e_1,e_5]=0, &[e_2,e_3]&=\frac{a}{2}e_3-fe_4, &[e_2,e_4]&=fe_3+\frac{a}{2}e_4,\\
 &[e_2,e_5]=0, &[e_3,e_4]&=-ae_1-2e_5, &[e_3,e_5]&=-e_4, \\
 &[e_4,e_5]=e_3, 
\end{aligned} 
\end{equation}

\begin{equation}\label{A4} \tag{A4}
\begin{aligned}
 &[e_1,e_2]=-a e_1-b e_2 -2e_5, &[e_1,e_3]&=-\frac{1}{2}be_3+ce_4, \\
 &[e_1,e_4]=-ce_3-\frac{1}{2}be_4, &[e_1,e_5]&=0,  \\
  &[e_2,e_3]=\frac{1}{2}ae_3-\frac{2+ac}{b}e_4, &[e_2,e_4]&=\frac{2+ac}{b}e_3+\frac{1}{2}ae_4,\\
 &[e_2,e_5]=0, &[e_3,e_4]&=-a e_1-b e_2-2e_5,\\
 &[e_3,e_5]=-e_4, &[e_4,e_5]&=e_3,  
\end{aligned} 
\end{equation}

\begin{equation}\label{B3} \tag{B3}
\begin{aligned}
 &[e_1,e_2]=-a e_1-2e_5, &[e_1,e_3]&=\frac{2}{a}e_4, &[e_1,e_4]&=-\frac{2}{a}e_3,  \\
 &[e_1,e_5]=0, &[e_2,e_3]&=\frac{a}{2}e_3-fe_4, &[e_2,e_4]&=fe_3+\frac{a}{2}e_4,\\
 &[e_2,e_5]=0, &[e_3,e_4]&=-ae_1-2e_5, &[e_3,e_5]&=e_4, \\
  &[e_4,e_5]=-e_3, 
\end{aligned} 
\end{equation}

\begin{equation}\label{B4} \tag{B4}
\begin{aligned}
 &[e_1,e_2]=-a e_1-b e_2 -2e_5, &[e_1,e_3]&=-\frac{1}{2}be_3+ce_4, \\
 &[e_1,e_4]=-ce_3-\frac{1}{2}be_4, &[e_1,e_5]&=0,  \\
 &[e_2,e_3]=\frac{1}{2}ae_3-\frac{ac-2}{b}e_4, &[e_2,e_4]&=\frac{ac-2}{b}e_3+\frac{1}{2}ae_4, \\
 &[e_2,e_5]=0, &[e_3,e_4]&=-a e_1-b e_2-2e_5,  \\
  &[e_4,e_5]=-e_3, &[e_3,e_5]&=e_4.
\end{aligned} 
\end{equation}
\vspace{2mm}

In the first two cases we have that the Lie algebra $\mathfrak{g}$ is isomorphic to the direct product $\mathfrak{sl}(2,\mathbb{R})\times \mathfrak{aff(\mathbb{R})}$, where 
\begin{equation*}
\begin{cases}
 \mathfrak{aff}(\mathbb{R})\simeq \spn \{ fe_1+ce_2,e_1+ce_5 \}\\
 \mathfrak{sl}(2,\mathbb{R}) \simeq \spn \{e_3,e_4,e_5 \}
 \end{cases}
 \end{equation*}
 in case \eqref{A1}, and
 \begin{equation*}
\begin{cases}
  \mathfrak{aff}(\mathbb{R})\simeq \spn \{ ae_1+be_2+2e_5,e_1-ce_5 \}\\
  \mathfrak{sl}(2,\mathbb{R}) \simeq \spn \{e_3,e_4,e_5 \}
 \end{cases}
\end{equation*}
in case \eqref{A2}.

In cases \eqref{B1} and \eqref{B2} we have respectively that
\begin{equation*}
\begin{cases}
 \mathfrak{aff}(\mathbb{R})\simeq \spn \{ ae_1+e_5,fe_1+e_5 \}\\
 \mathfrak{su}(2) \simeq \spn \{e_3,e_4,e_5 \}
 \end{cases}
 \end{equation*}
 or
 \begin{equation*}
\begin{cases}
  \mathfrak{aff}(\mathbb{R})\simeq \spn \{ ae_1+be_2+2e_5,e_1+ce_5 \}\\
  \mathfrak{su}(2) \simeq \spn \{e_3,e_4,e_5 \}
 \end{cases}
\end{equation*}
and thus $\mathfrak{g}$ is isomorphic to the direct product $\mathfrak{su}(2)\times \mathfrak{aff(\mathbb{R})}$.
Lastly, in cases \eqref{A3}, \eqref{A4}, \eqref{B3}, \eqref{B4} one has $\mathfrak{g}\simeq \mathbb{R}^2 \ltimes \mathfrak{h}_3$.

\section{Sasakian $\varphi$-symmetric five-dimensional Lie groups}

In this section we prove our main result:

\begin{theorem}
Let $G$ be a five-dimensional simply connected Lie group endowed with a  left-invariant Sasakian structure. If the Lie algebra of $G$ has trivial center, then $G$ is a Sasakian $\varphi$-symmetric space.
\end{theorem}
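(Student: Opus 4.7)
The plan is to apply the criterion established at the end of Section 2: $G$ is Sasakian $\varphi$-symmetric if and only if the K\"ahler quotient $G/H$, where $H=\exp(\mathbb{R}\xi)$, is a Hermitian symmetric space. Since $\xi$ is regular by homogeneity, $H$ is closed. The Sasakian identities give $\iota_\xi d\eta=0$, hence $\eta([\xi,X])=0$ for every $X\in\mathfrak{g}$, so $\ker\eta$ is $\ad(\xi)$-invariant and $\mathfrak{g}=\mathbb{R}\xi\oplus\ker\eta$ is a reductive decomposition; the induced K\"ahler data on $G/H$ are $\bar g=g|_{\ker\eta}$ and $J=\varphi|_{\ker\eta}$.

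The argument then proceeds case-by-case through the eight families \eqref{A1}--\eqref{B4} of Section \ref{section on 5-dim lie algebras}. In each case I work at the base point $o\in G/H$: the Levi-Civita connection of $\bar g$ at $o$ is determined from the brackets on $\ker\eta$ via the Koszul formula (valid since $\bar g$ is $G$-invariant and $\ker\eta$ is reductive), and from it one reads off the Riemann curvature $\bar R$. The task is then to verify that $\bar R$ is parallel, i.e.\ that $G/H$ is locally Hermitian symmetric; the $\varphi$-symmetry of $G$ then follows.

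For the first four cases \eqref{A1}, \eqref{A2}, \eqref{B1}, \eqref{B2}, where $\mathfrak{g}\simeq\mathfrak{sl}(2,\mathbb{R})\times\mathfrak{aff}(\mathbb{R})$ or $\mathfrak{su}(2)\times\mathfrak{aff}(\mathbb{R})$, I exhibit an orthogonal, $J$- and $\ad(\xi)$-invariant decomposition $\ker\eta=V_1\oplus V_2$ into two $2$-planes descending to integrable $J$-invariant distributions on $G/H$; this shows that $G/H$ is a Riemannian product of two homogeneous real surfaces, each automatically of constant curvature. Reading off the signs of these curvatures from the structure constants then gives $\mathbb{CH}^1(\lambda)\times\mathbb{CH}^1(\mu)$ in cases \eqref{A1}, \eqref{A2} and $\mathbb{CP}^1(\lambda)\times\mathbb{CH}^1(\mu)$ in cases \eqref{B1}, \eqref{B2}. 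In the remaining four cases \eqref{A3}, \eqref{A4}, \eqref{B3}, \eqref{B4}, where $\mathfrak{g}\simeq\mathbb{R}^2\ltimes\mathfrak{h}_3$, no such splitting exists; here I compute the Riemann tensor directly and aim to verify that the holomorphic sectional curvature of $\bar g$ at $o$ is constant, identifying $G/H$ with $\mathbb{CH}^2(\alpha)$ for a suitable $\alpha<0$.

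The main technical obstacle is the non-split cases \eqref{A3}--\eqref{B4}, in which constant holomorphic sectional curvature must be verified for every unit vector of $\ker\eta$, and the full list of Christoffel symbols in the frame $\{e_1,e_2,e_3,e_4\}$ must be managed; the presence of the free parameters $a,b,c,f$ in these families means every cancellation must be tracked explicitly. Once constancy is established at $o$, $G$-homogeneity propagates it globally, and the identification with a Hermitian symmetric space of complex dimension two follows from the standard two-dimensional classification.
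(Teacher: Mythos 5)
Your overall strategy coincides with the paper's: reduce to showing that the K\"ahler quotient $G/H$ is Hermitian symmetric, then run through the eight families, obtaining a product of one-dimensional complex space forms in cases \eqref{A1}, \eqref{A2}, \eqref{B1}, \eqref{B2} and $\mathbb{CH}^2(\alpha)$ in cases \eqref{A3}, \eqref{A4}, \eqref{B3}, \eqref{B4}. However, there are two genuine gaps. First, in the split cases you claim that exhibiting an orthogonal, $J$- and $\ad(\xi)$-invariant splitting $\ker\eta=V_1\oplus V_2$ into \emph{integrable} distributions ``shows that $G/H$ is a Riemannian product.'' Integrability of two complementary orthogonal distributions is not sufficient for a local de Rham splitting: you need the distributions to be \emph{parallel} (equivalently, both foliations totally geodesic). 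This is exactly what the paper's Lemma~\ref{LemmaDistribution} supplies: under the hypotheses $[\mathfrak{g},\mathfrak{h}\oplus\mathfrak{n}]\subset\mathfrak{h}\oplus\mathfrak{n}$ and $U(\mathfrak{m},\mathfrak{n})\subset\mathfrak{n}$, the invariant distribution determined by $\mathfrak{n}$ is parallel, and only then does the de Rham decomposition theorem give the product $M_1\times M_2$. Your argument as written skips the step that actually requires a computation with the operator $U$ (the paper checks, e.g., $U(e_3,e_1)=\tfrac12 c\,e_4$, etc., in case \eqref{A1}).

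Second, in the non-split cases your plan is only announced, not executed: you say you ``aim to verify'' constant holomorphic sectional curvature by computing the full curvature tensor with the free parameters $a,b,c,f$ present, and you yourself identify this as the main technical obstacle. As it stands this is a proof outline, not a proof. The paper avoids this computation entirely: it observes that $\mathfrak{s}=\spn\{E_1,\dots,E_4\}$ (e.g.\ $E_1=ae_1+2e_5$, $E_2=\tfrac1a e_2$, $E_3=e_3$, $E_4=e_4$ in case \eqref{A3}) is an ideal complementary to $\mathfrak{h}$, so the corresponding solvable subgroup $S$ acts simply transitively on $G/H$, and then matches the resulting left-invariant K\"ahler structure on $S$ (the decomposition $\mathbb{R}A_0\oplus\mathfrak{a}_1\oplus\mathfrak{a}_2$ with the stated bracket relations) against Heintze's realization of $\mathbb{CH}^2$ as a solvable Lie group with left-invariant K\"ahler metric. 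If you want to keep your direct approach you must actually carry out and exhibit the curvature computation; otherwise you should adopt some structural identification of this kind.
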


\bigskip
We need the following:
 
\begin{lemma}\label{LemmaDistribution}
Let $(G/H,g)$ be a homogeneous Riemannian manifold, where $G$ is a connected Lie group and 
suppose we are given a reductive decomposition:
$$
\mathfrak{g}=\mathfrak{h}\oplus \mathfrak{m},
$$ 
of the Lie algebra $\mathfrak{g}$ of $G$. We suppose there exists an $Ad(H)$-invariant subspace $\mathfrak{n}$ of $\mathfrak{m}$ satisfying:
\begin{equation} 
\label{U}
[\mathfrak{g}, \mathfrak{h}\oplus \mathfrak{n}] \subset \mathfrak{h}\oplus \mathfrak{n}, \quad U(\mathfrak{m}, \mathfrak{n}) \subset \mathfrak{n},
\end{equation}
where $U:\mathfrak{m}\times \mathfrak{m}\rightarrow \mathfrak{m}$ is the bilinear operator defined by 
$$
2\langle U(X,Y),Z\rangle=\langle [Z,X]_{\mathfrak{m}}, Y\rangle+ \langle [Z,Y]_{\mathfrak{m}}, X\rangle. 
$$
Here $\langle\,,\,\rangle$ is the $Ad(H)$-invariant scalar product on $\mathfrak{m}$ corresponding to the Riemannian metric $g$ on $G/H$. 
Then the $G$-invariant distribution $\mathcal{D}$ determined by $\mathfrak{n}$ is parallel with respect to the Levi-Civita connection of $g$. 
\end{lemma}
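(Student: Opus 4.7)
The plan is to reduce everything to an algebraic check at the origin $o=eH$ using the Nomizu description of the Levi-Civita connection on a reductive homogeneous space. Since $\mathcal{D}$ is $G$-invariant and $G$ acts by isometries of $g$, the condition that $\mathcal{D}$ be parallel is itself $G$-invariant, so it suffices to verify $\nabla_{X}\mathcal{D}\subset\mathcal{D}$ at $o$, where $T_o(G/H)$ is identified with $\mathfrak{m}$ and $\mathcal{D}_o$ with $\mathfrak{n}$.

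Next I would invoke the Nomizu formula: on a reductive homogeneous space the Levi-Civita connection corresponds to the $\mathrm{Ad}(H)$-equivariant linear map $\Lambda:\mathfrak{m}\to\mathrm{End}(\mathfrak{m})$ given by
\[
\Lambda(X)Y=\tfrac{1}{2}[X,Y]_{\mathfrak{m}}+U(X,Y),
\]
which is characterized by being torsion-free and metric with respect to $\langle\,,\,\rangle$. Under the identification $T_o(G/H)\simeq\mathfrak{m}$, the $G$-invariant distribution $\mathcal{D}$ is parallel at $o$ if and only if $\Lambda(X)\mathfrak{n}\subset\mathfrak{n}$ for every $X\in\mathfrak{m}$; by $G$-invariance this single algebraic condition propagates to all points of $G/H$.

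The verification of $\Lambda(X)\mathfrak{n}\subset\mathfrak{n}$ is then immediate from the two hypotheses in \eqref{U}. For $X\in\mathfrak{m}$ and $Y\in\mathfrak{n}$, the first hypothesis gives $[X,Y]\in[\mathfrak{g},\mathfrak{h}\oplus\mathfrak{n}]\subset\mathfrak{h}\oplus\mathfrak{n}$, and projecting onto $\mathfrak{m}$ along the reductive decomposition yields $[X,Y]_{\mathfrak{m}}\in\mathfrak{n}$; the second hypothesis, $U(\mathfrak{m},\mathfrak{n})\subset\mathfrak{n}$, directly gives $U(X,Y)\in\mathfrak{n}$. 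Adding these two contributions shows $\Lambda(X)Y\in\mathfrak{n}$, which completes the argument.

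I do not expect any real obstacle: the proof is essentially a bookkeeping exercise once one has the Nomizu formula in hand. The only point that requires a brief justification is the reduction to the origin, which rests on the facts that both $\mathcal{D}$ and the metric are $G$-invariant and that the $\mathrm{Ad}(H)$-invariance of $\mathfrak{n}$ (which follows from $[\mathfrak{h},\mathfrak{n}]\subset\mathfrak{h}\oplus\mathfrak{n}$ intersected with $\mathfrak{m}$) ensures the distribution is well defined as a $G$-invariant subbundle.
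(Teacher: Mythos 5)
Your proof is correct and follows essentially the same route as the paper's: both reduce the statement to the algebraic condition $[\mathfrak{m},\mathfrak{n}]_{\mathfrak{m}}+U(\mathfrak{m},\mathfrak{n})\subset\mathfrak{n}$ via the standard Kobayashi--Nomizu formula for the Levi-Civita connection of a reductive homogeneous space at the origin, and then propagate it by $G$-invariance. The only cosmetic difference is that the paper checks the condition on the local frame of fundamental vector fields $e_i^*$ (after first verifying, via the ideal hypothesis, that these are sections of $\mathcal{D}$, which produces the sign $-\tfrac{1}{2}[X,e_i]_{\mathfrak{m}}+U(X,e_i)$), whereas you phrase it through the Nomizu map $\Lambda(X)Y=\tfrac{1}{2}[X,Y]_{\mathfrak{m}}+U(X,Y)$; the sign discrepancy is immaterial since $[X,Y]_{\mathfrak{m}}\in\mathfrak{n}$ in either case.
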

\begin{proof}
Let $\pi:G \rightarrow G/H$ be the canonical projection, $e\in G$ the identity element of $G$ and $\underline{o}=\pi(e)$. For every $a\in G$ we denote by $\tau_a$ the isometry of $G/H$ given by $\tau_a(bH)=abH$. First of all we show  that the fundamental vector field $X^*$ determined by a vector $X\in \mathfrak{n}$ is a section of $\mathcal{D}$. Indeed, for every $a \in G$, according to \eqref{U} we have:
$$
(Ad(a^{-1})X) \in \mathfrak{h} \oplus \mathfrak{n},
$$ 
hence
$$
(\di \tau_{a^{-1}})_{aH}(X^*_{aH})=(Ad(a^{-1})X)^*_{\underline{o}}=(\di \pi)_e(Ad(a^{-1})X) \in \mathcal{D}_{\underline{o}},
$$
and this ensures that  $X^*_{aH}\in\mathcal{D}_{aH}$.
Now, let $\{ e_1, \dots ,e_k \}$ be a basis of $\mathfrak{n}$. Then 
$$
\{ (e_1^*)_{aH}, \dots ,(e_k^*)_{aH} \}
$$ 
is a basis of $\mathcal{D}_{aH}$, for every point $aH$ in a neighborhood $W$ of $\underline{o}$. Using the standard formula for the Levi-Civita
connection of a homogeneous Riemannian reductive space \cite{KN}, and the assumption \eqref{U}, we get:
$$
\nabla_X e^*_i=-\frac{1}{2}[X, e_i]_{\mathfrak{m}}+U(X,e_i)\in\mathfrak{n},
$$
for every $X\in T_{\underline{o}}(G/H)$. Here we are identifying $T_{\underline{o}}(G/H)$ with $\mathfrak{m}$ via  $(\di \pi)_e|_{\mathfrak{m}}:\mathfrak{m}\rightarrow T_{\underline{o}}(G/H)$. 
Then we see that for every section $Y$ of $\mathcal{D}$ and $X\in T_{\underline{o}}(G/H)$, $\nabla_XY$ belongs to $\mathcal{D}_{\underline{o}}$.
Hence, by the $G$-invariance, we can conclude that $\mathcal{D}$ is parallel.
\end{proof}

 \smallskip
\begin{proof}[Proof of the theorem]
Let $(\varphi, \xi, \eta,g)$ be a left-invariant Sasakian structure on $G$. We denote by $H$ the one parameter subgroup generated by $\xi$, and with $\bar{g}$ the induced K\"ahler metric on $G/H$ via the canonical submersion $\pi:G \rightarrow G/H$. We denote by $\mathfrak{h}=\mathbb{R}\xi$ the Lie algebra of $H$. As we recalled in Section~\ref{section on 5-dim lie algebras}, up to a $\mathcal{D}$-homothetic deformation, there exists an orthonormal basis $\{e_1, \dots ,e_5 \}$ of the Lie algebra $\mathfrak{g}$ of $G$ such that
$$
\ker \eta =\spn \{e_1, \dots ,e_4 \}, \, e_5=\xi, \, \varphi (e_1)=-e_2, \, \varphi (e_3)=-e_4,
$$
and the elements $e_1, \dots,e_5$ satisfy one of the bracket relations \eqref{A1}-\eqref{A4} or \eqref{B1}-\eqref{B4}. Moreover, $G/H$ is a homogeneous reductive space with a reductive decomposition given by 
\begin{equation*}
 \mathfrak{g}=\mathfrak{h}\oplus \mathfrak{m},
\end{equation*}
where $\mathfrak{m}:=\spn \{e_1,\dots ,e_4\}$.
The K\"ahler metric $\bar{g}$ on $G/H$ is the $G$-invariant metric induced by the scalar product $g_e$ on $\mathfrak{m}$. 

\medskip 

In cases \eqref{A1}, \eqref{A2}, \eqref{B1}, \eqref{B2} we consider the $G$-invariant distributions $\mathcal{D}_1$ and $\mathcal{D}_2$ on $G/H$ determined respectively by 
$$
\mathfrak{n}_1:= \spn \{e_1, e_2 \},\quad \mathfrak{n}_2:= \spn \{e_3 ,e_4 \}.
$$
Then $\mathcal{D}_1$ and $\mathcal{D}_2$ are orthogonal and complementary distributions on $G/H$ and one can easily check directly that $\mathfrak{n}_2$ satisfies the conditions \eqref{U} of Lemma~\ref{LemmaDistribution},
yielding that $\mathcal{D}_2$ is parallel. For example, in case \eqref{A1} we have:  
\begin{equation*}
\begin{aligned}
 &U(e_3,e_1)=\frac{1}{2}c e_4, &U(e_3, e_2)&=-\frac{1}{2}f e_4,  &U(e_3, e_3)&=0, &U(e_3,e_4)&=0, \\
 &U(e_4,e_1)=-\frac{1}{2}c e_3, &U(e_4, e_2)&=\frac{1}{2}f e_3,  &U(e_4,e_4)&=0.
\end{aligned} 
\end{equation*}
Then $G/H$ is isometric to the direct product $M_1 \times M_2$, where $M_1$ and $M_2$ are both $2$-dimensional homogeneous K\"ahler manifolds (cf.  \cite[Theorem~5.1, p. 211 and Theorem~8.1, p. 172]{KN}). 
Of course, each of these spaces has constant curvature, and this proves that $G/H$ is a Hermitian symmetric space. Actually in case \eqref{A1} and \eqref{A2} the sectional curvature of $M_1$  is $\lambda = -\frac{4}{c^2}$ or $\lambda =-(a^2+b^2)$ respectively, and the sectional curvature of $M_2$ is $\mu=-2$ in both cases; while $\lambda=-a^2$, $\mu =2$ in case \eqref{B1} and $\lambda=-(a^2+b^2)$, $\mu =2$ in case \eqref{B2}, as can be readily checked explicitly by using the corresponding bracket relations. 
Of course, this also yields that, as an Hermitian symmetric space, we have that 
$$
G/H\cong\mathbb{CH}^1(\lambda)\times\mathbb{CH}^1(\mu)
$$ 
in cases \eqref{A1} and \eqref{A2}, or
$$
G/H\cong \mathbb{CH}^1(\lambda) \times  \mathbb{CP}^1(\mu)
$$ 
in cases \eqref{B1} and \eqref{B2}. 

\medskip

Now consider the case \eqref{A3}. We set:  
$$
E_1=a e_1+2e_5, \, E_2=\frac{1}{a}e_2, \, E_3=e_3, \, E_4=e_4,
$$
and $\mathfrak{s}:=\spn \{E_1,\dots ,E_4 \}$. Then $\mathfrak{s}$ is an ideal 
of $\mathfrak{g}$ and 
$$\mathfrak{g}=\mathfrak{h}\oplus\mathfrak{s}.$$
Hence the analytic subgroup $S$ of $G$ corresponding to $\mathfrak{s}$ acts simply transitively on $G/H$ and
 the restriction $p:S\to G/H$ provides a diffeomorphism between $S$ and $G/H$. The $G$-invariant K\"ahler
 structure $(J,\bar g)$ of $G/H$ transfers via $p$ to a left-invariant K\"ahler structure on $S$, denoted by the same
 symbols. We claim  that $S$ is a complex hyperbolic space $\mathbb{C}H^2(\alpha)$.

Observe that $J$ is determined on $\mathfrak{s}$ by:
$$
J{E_1}=-a^2E_2, \quad {J}{E}_2=\frac{1}{a^2}{E}_1, \quad {J}{E}_3=-{E}_4, \quad {J}{E}_4={E}_3.
$$
Moreover, $\mathfrak{s}$ admits the following $\bar{g}$-orthogonal decomposition:
$$
\mathfrak{s}= \mathbb{R} A_0 \oplus\mathfrak{a}_1 \oplus \mathfrak{a}_2, 
$$
where 
$$
A_0=|a| E_2, \quad \mathfrak{a}_1=\spn \{E_3, E_4 \}, \quad \mathfrak{a}_2=\spn \{ JA_0 \}, \quad \lambda=\frac{|a|}{2}.
$$
The following relations hold:
\begin{equation*}
\begin{aligned}
 &[A_0, X]= \frac{|a|}{2} X+S_0(X),  &[A_0,JA_0]&=|a|JA_0\\
 &[X,Y]=|a| \bar{g}(JX,Y)JA_0, &[X, JA_0]&=0,
\end{aligned}
\end{equation*}
for every $X,Y \in \mathfrak{a}_1$, where $S_0:\mathfrak{a}_1+\mathfrak{a}_2\rightarrow \mathfrak{a}_1+\mathfrak{a}_2$ is the skew-symmetric derivation defined by 
$$S_0(E_3)=-\frac{af}{|a|}E_4, \quad S_0(E_4)=\frac{af}{|a|}E_3, \quad  S_0(JA_0)=0.$$ 
Now our claim follows from Heintze's description of the complex hyperbolic space as a solvable Lie group endowed with a left-invariant
K\"ahler metric of negative curvature (cf. \cite[Section 6]{Heintze}).

All the remaining cases can be treated similary. In case \eqref{A4} consider 
$$
E_1=a e_1+be_2+2e_5, \quad E_2=\frac{1}{b}e_1, \quad E_3=e_3, \quad E_4=e_4,
$$ 
and again the ideal $\mathfrak{s}=\spn \{E_1,\dots ,E_4 \}$. 
Now we have:
$$
JE_1=-\frac{a}{b}E_1+(a^2+b^2) E_2, \quad JE_2=-\frac{1}{b^2} E_1+\frac{a}{b}E_2, \quad JE_3=-E_4, \quad JE_4=E_3,
$$
and $\mathfrak{s}$ amdits the orthogonal decomposition:
$$
\mathfrak{s}= \mathbb{R} A_0 \oplus \mathfrak{a}_1 \oplus \mathfrak{a}_2, 
$$
with
\begin{equation}
 \begin{aligned}
  & A_0=\frac{1}{2 \lambda} \big(\frac{a}{b}E_1-(a^2+b^2)E_2 \big), \quad \lambda=\sqrt{\frac{a^2+b^2}{4}}\\
  &\mathfrak{a}_1=\spn \{E_3, E_4 \},   \quad \mathfrak{a}_2=\spn \{ JA_0 \}, 
 \end{aligned}
\end{equation}
Then the following relations hold 
\begin{equation*}
\begin{aligned}
 &[A_0, X]= \lambda X+S_0(X),  &[A_0,JA_0]&=2 \lambda JA_0\\
 &[X,Y]=2 \lambda \bar{g}(JX,Y)JA_0, &[X, JA_0]&=0,
\end{aligned}
\end{equation*}
for  $X,Y \in \mathfrak{a}_1$, 
where $S_0:\mathfrak{a}_1+\mathfrak{a}_2\rightarrow \mathfrak{a}_1+\mathfrak{a}_2$ is now given by 
$$S_0(E_3)=-2 \lambda \frac{c}{b}E_4, \quad S_0(E_4)=2 \lambda \frac{c}{b}E_3, \quad  S_0(JA_0)=0,$$
and thus again $S$ is a complex hyperbolic space.
Cases \eqref{B3} and \eqref{B4} are referable to cases \eqref{A3} and \eqref{A4} respectively. 
Indeed we can consider the ideals
$$
\mathfrak{s}=\spn \{ a e_1+2e_5, \frac{1}{a}e_2,  e_3, e_4 \}
$$
in case \eqref{B3} and  
$$
\mathfrak{s}=\spn \{a e_1+be_2+2e_5, \frac{1}{b}e_1, e_3, e_4\}
$$ 
in case \eqref{B4}, and the left-invariant K\"ahler structure on such Lie algebras obtained as before. 
One can check that in the first case this Lie algebra $\mathfrak{s}$ is isometric to the one considered in case \eqref{A3}, while in the second case $\mathfrak{s}$ is isometric to the Lie algebra considered in case \eqref{A4}, thus completing the proof.

\end{proof}

\section{$(k,\mu)$-contact metric spaces of dimension five}
In \cite{Boeckx} Boeckx classified all the non-Sasakian contact metric $(k,\mu)$-spaces, i.e., the
contact metric manifolds whose curvature satisfies condition \eqref{kmu},
up to $\mathcal{D}$-homothetic transformations. He showed that such a space is locally determined by its dimension and the number $I=\frac{1-\frac{\mu}{2}}{\sqrt{1-k}}$; moreover he gave explicit examples of $(k,\mu)$-spaces, consisting of some abstract Lie groups, for every possible dimension and and for every $I\leqslant -1$. 
We remark that in his exposition Boeckx does not give any explicit description of these Lie groups.
In this section we shall construct  $5$-dimensional non-Sasakian $(k,\mu)$-spaces with Boeckx invariant $I<-1$ in terms of classical Lie groups, using the classification of the $5$-dimensional Sasakian Lie algebras described in section \ref{section on 5-dim lie algebras}. Our tool will be the relation between Sasakian structures and non-Sasakian $(k,\mu)$-structures  explored by Cappelletti-Montano, Carriazo and Mart\'{\i}n-Molina in \cite{Carriazo}.  Their approach is based on the Pang invariant of a Legendre foliation 
(cf. \cite{Pang}). 
We recall that, if $\mathcal{D}$ is an integrable Legendre distribution on a contact manifold $(M,\eta)$,  
its Pang invariant is the symmetric bilinear form on the tangent bundle of $\mathcal{D}$, defined by
$$\Pi_{\mathcal{D}} (X, X') =-(\mathcal{L}_X\mathcal{L}_{X'} \eta)(\xi) = 2d\eta([\xi, X], X'),$$
where $\xi$ is the Reeb vector field.

\begin{theorem}
Let $\mathfrak{g}$ be the Sasakian Lie algebra of type \eqref{A2} with  
$$c=a=0,\quad b=-\sqrt{2}.$$ 
Let $G$ be a connected Lie group with Lie algebra $\mathfrak{g}$ and denote by $(\varphi, \xi, \eta, g)$ the associated left-invariant Sasakian structure on $G$.
Then $\mathfrak{g}$ admits an orthogonal decomposition
$$
\mathfrak{g}=\mathfrak{d}_+\oplus\mathfrak{d}_-\oplus \mathbb{R} \xi
$$
such that, for every real number $a>1$, the modified left-invariant metric $g_a$ defined by:
$$
(g_a)_e=
\begin{cases}
  \frac{1}{a} g_e,  &\text{on } \mathfrak{d}_+ \times \mathfrak{d}_+ \\
  a g_e ,   &\text{on } \mathfrak{d}_- \times \mathfrak{d}_-\\
   \eta_e \otimes \eta_e ,  &\text{otherwise}
 \end{cases},
$$
is also associated to the contact form $\eta$ and determines a contact metric $(k,\mu)$ structure on $G$, whose Boeckx invariant is
\begin{equation}
\label{Ia}
I_a=-\frac{a^2+1}{a^2-1}.
\end{equation}
\end{theorem}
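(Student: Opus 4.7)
The plan is to apply the construction principle of \cite{Carriazo}, which produces contact metric $(k,\mu)$-structures from a Sasakian manifold endowed with a pair of conjugate, integrable Legendre distributions whose Pang invariants are suitable scalar multiples of the induced metric. Accordingly, the first task is to exhibit such a pair on the Sasakian Lie algebra $(\mathfrak{g},\varphi,\xi,\eta,g)$ in question. Specializing the bracket relations \eqref{A2} to $c=a=0$ and $b=-\sqrt{2}$, a natural candidate is
\[
\mathfrak{d}_{+}:=\spn\{e_{1}+e_{3},\,e_{2}-e_{4}\},\qquad \mathfrak{d}_{-}:=\spn\{e_{1}-e_{3},\,e_{2}+e_{4}\},
\]
and I would verify that $\mathfrak{d}_{\pm}$ are mutually $g$-orthogonal, $\varphi$-conjugate (so that $\varphi\mathfrak{d}_{+}=\mathfrak{d}_{-}$), and Legendre, using the identity $d\eta(X,Y)=g(X,\varphi Y)$. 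Integrability then follows from short bracket computations such as $[e_{1}+e_{3},e_{2}-e_{4}]=\sqrt{2}(e_{2}-e_{4})\in\mathfrak{d}_{+}$ and $[e_{1}-e_{3},e_{2}+e_{4}]=\sqrt{2}(e_{2}+e_{4})\in\mathfrak{d}_{-}$.

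The second step is to compute the Pang invariants, using $[\xi,e_{1}]=[\xi,e_{2}]=0$, $[\xi,e_{3}]=e_{4}$, $[\xi,e_{4}]=-e_{3}$; a preliminary calculation gives $\Pi_{\mathfrak{d}_{\pm}}=-\,g|_{\mathfrak{d}_{\pm}}$. With these proportionalities in hand, the main result of \cite{Carriazo} applies: for each $a>1$, the left-invariant metric $g_a$ obtained by contracting $\mathfrak{d}_{+}$ by $1/\sqrt{a}$ and dilating $\mathfrak{d}_{-}$ by $\sqrt{a}$, while preserving $\eta$ and $\xi$, is associated to the contact form $\eta$, and its Reeb vector field $\xi$ satisfies the curvature condition \eqref{kmu} with constants $k$ and $\mu$ that depend explicitly on $a$ and on the Pang proportionality constants.

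Finally, substituting the resulting $k$ and $\mu$ into Boeckx's formula $I=(1-\mu/2)/\sqrt{1-k}$ should produce
\[
I_{a}=-\frac{a^{2}+1}{a^{2}-1},
\]
thereby completing the proof; as $a$ varies in $(1,+\infty)$, $I_{a}$ exhausts the interval $(-\infty,-1)$, as announced in the introduction.

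The main obstacle is the preliminary, somewhat ad hoc step of choosing $\mathfrak{d}_{\pm}$: naive choices such as $\spn\{e_{1},e_{3}\}$ inside $\ker\eta$ are Legendre but have Pang invariants that are \emph{not} proportional to $g|_{\mathfrak{d}}$. One must instead rotate within $\ker\eta$ to align the distributions with the eigenspaces of the operator that will play the role of the structural tensor $h$ for $g_a$. Once this choice is fixed, the remaining verifications reduce to elementary bracket manipulations and the algebraic derivation of $I_a$ from $k$ and $\mu$.
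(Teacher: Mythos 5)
Your proposal follows the same route as the paper's proof: exhibit a pair of mutually orthogonal, $\varphi$-conjugate, integrable Legendre distributions on $\ker\eta$ whose Pang invariants equal $-g$, then invoke Theorem~4.1 of \cite{Carriazo} to obtain the associated metrics $g_a$ together with $k_a$, $\mu_a$, and finally compute $I_a$ from Boeckx's formula. Your choice $\mathfrak{d}_{\pm}=\spn\{e_1\pm e_3,\,e_2\mp e_4\}$ differs from the paper's $\mathfrak{d}_+=\spn\{e_1+e_4,e_2+e_3\}$, $\mathfrak{d}_-=\spn\{e_3-e_2,e_1-e_4\}$, but both pairs are subalgebras with the required properties (your bracket and Pang computations check out), so the argument is essentially identical.
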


\begin{proof}
Let $\{ e_1,\dots ,e_5 \}$ be the orthonormal basis of the Sasakian Lie algebra $(\mathfrak{g}, \varphi, \xi,\eta,g)$, with
\begin{equation}
\label{sasaki-A2}
\ker \eta =\spn \{e_1, \dots ,e_4 \}, \, e_5=\xi, \, \varphi (e_1)=-e_2, \, \varphi (e_3)=-e_4,
\end{equation}
and for which the nonzero bracket relations are:
\begin{equation*}
  \begin{aligned}
 &[e_1,e_2]=\sqrt{2} e_2 -2e_5, &[e_2\,e_3]&=\sqrt{2} e_4, &[e_2,e_4]&=-\sqrt{2} e_3, \\
 &[e_4,e_5]=e_3,  &[e_3,e_4]&=-2e_5, &[e_3,e_5]&=-e_4. 
\end{aligned} 
 \end{equation*}
Now, we observe that the following two Lie sub-algebras of $\mathfrak{g}$: 
$$
\mathfrak{d}_+:=\spn \{ e_1+e_4, e_2+e_3 \}, \quad \mathfrak{d}_-:=\spn \{ e_3-e_2, e_1-e_4 \}
$$
determine two totally geodesic, mutually orthogonal, Legendre left-invariant foliations $\mathcal{D_+}$ and $\mathcal{D_-}$ of $G$  with respect
to the Sasakian structure. Moreover, the Pang invariants $\Pi_{\mathcal{D_+}}$ and $\Pi_{\mathcal{D_-}}$ 
are determined at the neutral element of $G$ by:
\begin{equation*}
 \begin{aligned}
  & \Pi_{\mathcal{D_+}}(X,Y)=2 \di \eta ([\xi, X], Y)=-g(X,Y),\\
  & \Pi_{\mathcal{D_-}}(V,W)=2 \di \eta ([\xi, V], W)=-g(V,W),
 \end{aligned}
\end{equation*} 
where $X,Y\in\mathfrak{d}_+$ and $V,W\in\mathfrak{d}_-$. 
Hence, by the left-invariance of the Sasakian structure, Theorem~4.1 of \cite{Carriazo} is applicable, 
ensuring that there exists a family of contact metrics $(k_a,\mu_a)$-structure on $G$ compatible with the contact form $\eta$, with 
\begin{equation}
\label{tensors}
  g_a= 
 \begin{cases}
  -\frac{1}{a} g,  &\text{on } T \mathcal{D_+} \times T \mathcal{D_+} \\
  -a g ,   &\text{on } T \mathcal{D_-} \times T \mathcal{D_-}\\
   \eta \otimes \eta ,  &\text{otherwise}
 \end{cases},
 \quad \varphi _a= 
 \begin{cases}
  -\frac{1}{a} \varphi, &\text{on } T \mathcal{D_+} \\
  -a \varphi , &\text{on } T \mathcal{D_-}\\
  0,  &\text{on } \mathbb{R}\xi
 \end{cases},
\end{equation}
$$
k_a=1-\frac{(a^2-1)^2 }{16 a^2}, \quad  \mu_a=2-\frac{a^2+1}{2a},
$$
and $I_a=-\frac{a^2+1}{|a^2-1|}$, parametrized by $a< -1$. This yields the result.
\end{proof}

\bigskip
Since $I_a$ in \eqref{Ia} assumes all values in $(-\infty,-1)$, we can reformulate the above result in order to obtain the following local classification of
five dimensional $(k,\mu)$ contact metric spaces. Denote by $\{ E_1, \dots ,E_5 \}$ 
the standard basis of the Lie algebra $\mathfrak{sl}(2,\mathbb{R}) \times \mathfrak{aff(\mathbb{R})}$,
satisfying the following relations: 
$$[E_1,E_2]= E_2, \; [E_3,E_4]=-E_5, \; [E_3,E_5]=-2E_4, \; [E_4,E_5]=2E_3.$$

\begin{corollary}
Every five-dimensional contact metric $(k,\mu)$-space with Boeckx invariant $I<-1$ is locally equivalent,  up to a $\mathcal{D}$-homothetic deformation, to the direct product
$$SL(2,\mathbb{R}) \times Aff(\mathbb{R}),$$ 
endowed with the left invariant contact metric structure $(\varphi, \xi,\eta,g)$ determined with respect to the standard basis $\{ E_1, \dots ,E_5 \}$ of $\mathfrak{aff(\mathbb{R})} \times \mathfrak{sl}(2,\mathbb{R})$ by:
 \begin{equation*}
 \varphi \equiv 
 \begin{pmatrix}
       0        &   2s   &   t   &    0           &   0  \\
 -\frac{1}{2}s  &    0   &   0   & \frac{1}{2} t  &   0  \\
  \frac{1}{2}t  &    0   &   0   &    -s          &   0 \\
         0      &    t   &   s   &     0          &   0  \\
  \frac{1}{2}s  &    0   &   0   & -\frac{1}{2} t &   0     
 \end{pmatrix}, 
 \quad 
 g \equiv 
 \begin{pmatrix}
 -\frac{1}{2}s   &   0   &   0   &  \frac{1}{2} t   &   0  \\
  0   &   4-2s   &  -t   &   0   &   4  \\
  0   &   -t   &   -s    &    0   &   0 \\
 \frac{1}{2} t  &   0   &   0   &   -s    &   0  \\
  0   &   4   &   0   &   0   &   4     
 \end{pmatrix}, 
 \end{equation*}         
 $$\eta(E_1)=\eta(E_3)=\eta(E_4)=0, \quad \eta(E_2)=\eta(E_5)=2, \quad \xi=\frac{1}{2}E_5,$$
 where 
$$
s=-\frac{1}{2}(\sqrt{\frac{I-1}{I+1}}+\sqrt{\frac{I+1}{I-1}}), \quad 
t=-\frac{1}{\sqrt{2}}(\sqrt{\frac{I-1}{I+1}}-\sqrt{\frac{I+1}{I-1}}).$$

\end{corollary}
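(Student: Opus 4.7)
The corollary combines Boeckx's classification with the preceding theorem and an explicit change of basis; my plan has three stages.

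First, I invoke Boeckx's classification (recalled at the start of this section): non-Sasakian contact metric $(k,\mu)$-spaces of a fixed dimension are locally equivalent, up to $\mathcal{D}$-homothetic deformations, if and only if their Boeckx invariants coincide. Hence, to prove the corollary it suffices to exhibit, for each $I < -1$, a single left-invariant contact metric $(k,\mu)$-structure on $SL(2,\mathbb{R}) \times \mathrm{Aff}(\mathbb{R})$ with Boeckx invariant equal to $I$ and matching the stated matrix form. Existence is immediate from the previous theorem: the map $a \mapsto I_a = -(a^2+1)/(a^2-1)$ is a smooth decreasing bijection from $(1,\infty)$ onto $(-\infty,-1)$, with inverse $a^2 = (I-1)/(I+1)$. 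In particular $\sqrt{(I-1)/(I+1)} = a$ and $\sqrt{(I+1)/(I-1)} = 1/a$, so the quantities $s$ and $t$ of the corollary simplify to $s = -\tfrac{1}{2}(a+1/a)$ and $t = -\tfrac{1}{\sqrt{2}}(a-1/a)$.

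Second, I identify the Sasakian Lie algebra $\mathfrak{g}$ of type \eqref{A2} (with $c = a = 0$, $b = -\sqrt{2}$) with $\mathfrak{sl}(2,\mathbb{R}) \times \mathfrak{aff}(\mathbb{R})$ in its standard basis. Using the description of the two factors given for case \eqref{A2} in Section~\ref{section on 5-dim lie algebras}, a direct bracket calculation shows that the rescaling
$$E_1 = \tfrac{1}{\sqrt{2}}\, e_1,\qquad E_2 = -\sqrt{2}\, e_2 + 2 e_5,\qquad E_3 = e_3,\qquad E_4 = e_4,\qquad E_5 = 2\, e_5$$
reproduces the standard relations $[E_1,E_2] = E_2$, $[E_3,E_4] = -E_5$, $[E_3,E_5] = -2E_4$, $[E_4,E_5] = 2E_3$ and makes the two factors commute. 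The claimed values $\xi = \tfrac{1}{2} E_5$, $\eta(E_2) = \eta(E_5) = 2$, and $\eta(E_1) = \eta(E_3) = \eta(E_4) = 0$ follow immediately from $\xi = e_5$ and $\ker \eta = \spn\{e_1,\ldots,e_4\}$.

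Third, I compute the matrices of $\varphi$ and $g$ in the basis $\{E_1,\ldots,E_5\}$ from the formulas \eqref{tensors} of the theorem. The tool is to decompose each $E_i$ along the $g$-orthogonal splitting $\mathfrak{d}_+ \oplus \mathfrak{d}_- \oplus \mathbb{R}\xi$; explicitly, one uses $e_1 = \tfrac{1}{2}[(e_1+e_4) + (e_1-e_4)]$, $e_4 = \tfrac{1}{2}[(e_1+e_4) - (e_1-e_4)]$, $e_2 = \tfrac{1}{2}[(e_2+e_3) - (e_3-e_2)]$, $e_3 = \tfrac{1}{2}[(e_2+e_3) + (e_3-e_2)]$, and $e_5 = \xi$. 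Applying \eqref{tensors} on each block produces $\varphi_a(E_i)$ and $g_a(E_i, E_j)$ as polynomials in $a$ and $1/a$, which, upon substituting $a + 1/a = -2s$ and $a - 1/a = -\sqrt{2}\, t$, should reproduce the tabulated matrices. The main obstacle is precisely this bookkeeping: the calculation is routine entry by entry, but matching all twenty-five entries of $g$ (and similarly of $\varphi$) simultaneously to the specified expressions in $s$ and $t$ is a matter of careful and systematic organization rather than new ideas.
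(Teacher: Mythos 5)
Your proposal is correct and follows exactly the route the paper intends (the paper gives no explicit proof, presenting the corollary as a ``reformulation'' of the theorem via Boeckx's uniqueness result plus the change of basis to the standard generators of $\mathfrak{sl}(2,\mathbb{R})\times\mathfrak{aff}(\mathbb{R})$); I spot-checked your basis change and several entries of $g_a$ and $\varphi_a$ under the substitutions $a+1/a=-2s$, $a-1/a=-\sqrt{2}\,t$, and they reproduce the tabulated matrices. The only slip is immaterial: $a\mapsto I_a=-1-2/(a^2-1)$ is \emph{increasing} on $(1,\infty)$, not decreasing, though it is indeed a bijection onto $(-\infty,-1)$.
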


\bigskip

\begin{remark}\em
Concerning the already cited examples of $(k,\mu)$-spaces provided by Boeckx,  they consist in a $2$-parameter family of abstract Lie groups,
endowed with a left-invariant contact metric structure. 
Actually, one can check that, in the five dimensional case, the corresponing Lie algebras are all isomorphic to 
$\mathfrak{sl}(2,\mathbb{R}) \times \mathfrak{aff(\mathbb{R})}$. 
\end{remark}

\vspace{3 mm}
\noindent 
Eugenia Loiudice

\vspace{1 mm}
\noindent Dipartimento di Matematica, Università di Bari Aldo Moro, 

\noindent Via Orabona 4, 70125 Bari, Italy

\noindent \emph{e-mail}: eugenia.loiudice@uniba.it

\vspace{3 mm}
\noindent 
Antonio Lotta

\vspace{1 mm}
\noindent Dipartimento di Matematica, Università di Bari Aldo Moro, 

\noindent Via Orabona 4, 70125 Bari, Italy

\noindent \emph{e-mail}: antonio.lotta@uniba.it

\end{document}